\newtheorem{theorem}{Theorem}[section]
\newtheorem{corollary}[theorem]{Corollary}
\newtheorem{lemma}[theorem]{Lemma}
\newtheorem{proposition}[theorem]{Proposition}
\theoremstyle{definition}
\newtheorem{remark}[theorem]{Remark}
\numberwithin{equation}{section}
\newcommand{\Cee}{{\mathbb C}}
\newcommand{\En}{{\mathbb N}}
\newcommand{\fB}{{\mathcal B}}
\newcommand{\fK}{{\mathcal K}}
\newcommand{\fM}{{\mathcal M}}
\newcommand{\alp}{\alpha}
\newcommand{\del}{\delta}
\newcommand{\Del}{\Delta}
\newcommand{\eps}{\varepsilon}
\newcommand{\gam}{\gamma}
\newcommand{\Gam}{\Gamma}
\newcommand{\lam}{\lambda}
\newcommand{\vphi}{\varphi}
\newcommand{\ome}{\omega}
\newcommand{\wbar}[1]{\overline{#1}} 
\newcommand{\til}[1]{\tilde{#1}}
\newcommand{\lin}{\mathrm{lin}}
\begin{document}

%%%%% To ease editing, for IMPAN journals add:

\baselineskip=17pt

%%%%%%%%%%%%%%%%

\title{On convoluters on $L^p$-spaces}

\author{Matthew Daws\\
Leeds, United Kingdom \\
E-mail: matt.daws@cantab.net
\and 
Nico Spronk \\
Department of Pure Mathematics \\ 
University of Waterloo\\
Waterloo, Ontario, N2L3G1, Canada\\
E-mail: nspronk@uwaterloo.ca}

\date{\today}

\maketitle

%% Classification and key words; note that the 2010 classification is used:

\renewcommand{\thefootnote}{}

\footnote{2010 \emph{Mathematics Subject Classification}: Primary 43A15; Secondary 22D12, 47L10.}

\footnote{\emph{Key words and phrases}: convoluter, pseudo-measure, approximation property.}

\renewcommand{\thefootnote}{\arabic{footnote}}
\setcounter{footnote}{0}

%%%%%%%%

\begin{abstract}
We prove two theorems about convolution operators on $L^p(G)$ for a locally compact group $G$.
First, if $G$ has the approximation property, then the algebra of convoluters is the algebra of pseudo-measures.
Second, the bicommutant of the algebra of pseudo-measures is the algebra of convoluters.
\end{abstract}

\section{Introduction}

Let $G$ be a locally compact group and $L^p(G)$ denote the $L^p$-space with respect to left Haar measure, for
$p\in [1,\infty]$.  The algebra of bounded operators $\fB(L^p(G))$ for $p\in(1,\infty)$ admits a natural predual
(which we specify in Section \ref{sec:predual}, below) and hence admits a specified weak* topology
which is finer than the weak operator topology.
We consider the two subalgebras,  the {\it pseudo-measures} and {\it convoluters}, which are given by
\begin{align*}
PM_p(G)&=\wbar{\lin}^{w*}\lam_p(G)&&\text{ (weak* closed linear span)} \\
CV_p(G)&=\rho_p(G)'&&\text{ (commutant)}
\end{align*}
where the right and left regular representations $\lam_p,\rho_p:G\to\fB(L^p(G))$ are given by
\[
\lam_p(s)f(t)=f(s^{-1}t),\quad
\rho_p(s)f(t)=\Del(s)^{1/p}f(ts)
\]
for all $s$ and a.e.\ $t$.  Here $\Del$ is the Haar modular function.  Since $\lam_p(G)\subseteq\rho_p(G)'$
and commutant algebras are weak operator closed, hence
weak* closed, we have that $PM_p(G)\subseteq CV_p(G)$.  Notice, moreover,
that the invertible isometry $U$ in $\fB(L^p(G))$, given by $Uf(t)=\Del^{1/p}(t^{-1})f(t^{-1})$, is self-inverse and
intertwines $\lam_p$ and $\rho_p$, which shows we may interchange the roles of left and right.

The {\it approximation property} was defined by Haagerup and Kraus \cite{haagerupk}.  A detailed definition is 
provided in Section \ref{sec:proof1}, below.

\begin{theorem}\label{theo:approx}
If $G$ has the approximation property, then 
\[
CV_p(G)=PM_p(G).
\]
\end{theorem}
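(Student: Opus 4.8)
The plan is to fix an arbitrary $T\in CV_p(G)$ and exhibit it as a weak$^*$-limit of elements of $PM_p(G)$; since $PM_p(G)$ is weak$^*$-closed in $CV_p(G)=\overline{A_p}(G)^*$ and $PM_p(G)\subseteq CV_p(G)$ always holds, this forces $CV_p(G)=PM_p(G)$. The whole purpose of the previous proposition is to manufacture, out of the compactly supported mollifier $f$, functionals that are weak$^*$-continuous on $\mc M_2(G)$ and therefore detect the approximation property.

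First I would keep the element $f\in\check A_p(G)$ (pointwise positive, $\int_G f=1$) and, for each $a\in\overline{A_p}(G)$, form the functional $\nu_a$ on $\mc M_2(G)$ given by $\varphi\mapsto\ip{T}{(f*\varphi)\cdot a}$, where $\varphi$ is first pushed into $\mc M_p(G)$ along the contraction $\mc M_2(G)\to\mc M_p(G)$ and the product is the module action of Proposition~\ref{prop:Mpaction2}. The estimate $\|\nu_a\|\leq\|T\|\,\|a\|$ is routine, so $a\mapsto\nu_a$ is a bounded linear map $\overline{A_p}(G)\to\mc M_2(G)^*$.

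The key step is to upgrade the previous proposition from a dense subspace to all of $\overline{A_p}(G)$. That proposition says exactly that $\nu_a\in Q(G)$ whenever $a$ is induced by some $\tau\in C_{00}(G)\otimes C_{00}(G)$, and such $a$ are dense in $\overline{A_p}(G)$. Since $a\mapsto\nu_a$ is bounded and $Q(G)$ is norm-closed in $\mc M_2(G)^*$, I conclude that $\nu_a\in Q(G)$ for \emph{every} $a\in\overline{A_p}(G)$. I expect this closedness-plus-density argument to be the main obstacle to phrase cleanly, and it is the crucial point: the approximation property furnishes only an \emph{unbounded} net in general, so I cannot hope to control $\|S_i\|$ below and pass a convergence statement from a dense set to all of $\overline{A_p}(G)$; instead I must obtain weak$^*$-continuity of $\nu_a$ for all $a$ simultaneously, which is precisely what this step delivers.

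Finally I would invoke the approximation property. As compactly supported elements are norm-dense in $A(G)$, the constant $1$ lies in the weak$^*$-closure of the compactly supported part of $A(G)$; fix a net $(u_i)$ there with $u_i\to 1$ in $\sigma(\mc M_2(G),Q(G))$. For each $a$, weak$^*$-continuity of $\nu_a$ gives $\ip{T}{(f*u_i)\cdot a}=\ip{\nu_a}{u_i}\to\ip{\nu_a}{1}=\ip{T}{a}$, using $f*1=1$. Set $S_i=(f*u_i)\cdot T$. Since $f*u_i\in\mc M_p(G)$ has compact support, contained in $\supp(f)\supp(u_i)$, Proposition~\ref{prop:cmpt_support} shows $S_i\in PM_p(G)$, while the module duality gives $\ip{S_i}{a}=\ip{T}{(f*u_i)\cdot a}\to\ip{T}{a}$ for all $a\in\overline{A_p}(G)$. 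Thus $S_i\to T$ weak$^*$, and $T\in PM_p(G)$. I would close by remarking on the role of passing through $\mc M_2(G)$: the approximation property is a Hilbert-space ($p=2$) condition, and it reaches every $p$ only via the contraction $\mc M_2(G)\to\mc M_p(G)$ coming from the fact that Hilbert spaces are $p$-spaces.
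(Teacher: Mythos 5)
Your proposal is correct and is essentially the paper's own proof: the paper likewise extends the previous proposition from $C_{00}(G)\otimes C_{00}(G)$ to the whole predual by boundedness of the map $\tau\mapsto\mu$ together with norm-closedness of $Q(G)$, then feeds in the approximation-property net, applies Proposition~\ref{prop:cmpt_support} to $(f*\varphi_i)\cdot T$, and concludes by weak$^*$-closedness of $PM_p(G)$. The only cosmetic difference is that you work with $a\in\overline{A_p}(G)$ while the paper works with $\tau\in L^p(G)\proten L^{p'}(G)$, which is equivalent since $T\in CV_p(G)$ only depends on the image of $\tau$ in the quotient by $cv_p(G)$; your remark that the unboundedness of the net forces one to establish weak$^*$-continuity of every $\nu_a$, rather than run a density argument on a bounded net, correctly identifies the crux.
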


A complete description of connected groups with the approximation property is provided
by Haagerup, Knudby and de Laat \cite{haagerupkdl}: any simple Lie quotient must have
real rank $1$.   This builds on an enormous body of work, including \cite{lafforguedls,haagerupdl1,haagerupdl2}.
The approximation property
passes to closed subgroups, extension groups, and is passed up from lattices \cite{haagerupk}.
In particular, countably generated free groups enjoy this property, even the stronger property of
{\it weak amenability}, defined by de Canni\`{e}re and Haggerup \cite{decanniereh}.
The approximation property does not pass to general quotients:  
the finitely generated lattice $\mathrm{SL}_3(\mathbb{Z})$ in $\mathrm{SL}_3(\mathbb{R})$
is a quotient of some free group $\mathbb{F}_n$.  In fact, we have the following implications
of properties of $G$:
\[
\text{amenable }\Rightarrow\text{ weakly amenable }\Rightarrow\text{ approximation property.}
\]
The first implication is given by Leptin \cite{leptin}, i.e.\ a bounded approximate identity in 
the Fourier algebra $A_2(G)$ satisfies the definition of weak amenability of \cite{decanniereh}. 
For amenable $G$, Theorem \ref{theo:approx}, is proved by Herz \cite{herz3}; see 
Remark \ref{rem:weakamen}.

A certain $p$-approximation property, which is implied by the approximation property
was defined by An, Lee and Ruan \cite{anlr}, and studied vigorously by Vergara \cite{vergara}.
Using this, Vergara strengthens our Theorem 
\ref{theo:approx}, but relies heavily on our methods. See Remark \ref{rem:vergara}, below.

On a related note, we also give an elementary proof of the following.  Recall that the intertwiner
$U$ is given above

\begin{theorem}\label{theo:doublecommutant}
We have commutation results:  
\[
PM_p(G)'=U\,CV_p(G)U\text{ and }(U\,CV_p(G)U)'=CV_p(G).
\]
In particular, we have bicommutant $PM_p(G)''=CV_p(G)$.
\end{theorem}

For $p=2$ the comutation result of convoluters was proved by Dixmier \cite{dixmier} using left 
Hilbert algebras.  Recently, Pham \cite{pham}
gave an elementary proof in the $p=2$ case.  
This proof is similar to the one offered here, but we wish to note that ours was conducted independently and 
posted on {\tt arXiv} in 2016.  

The present article is an updated version of our work \cite{dawss}, the main body of which was first
posted to the {\tt arXiv} in 2013.  Both theorems appear to be folklore --- see Cowling \cite{cowling}, and
Herz \cite{herz3} --- but we have been unable to track down complete proofs.
Theorem \ref{theo:approx} was used in recent work of \"{O}ztop and the 
second named author \cite{oztops} to obtain, for a pair of groups $G$ and $H$ with approximation property, 
a tensor product description of $A_p(G\times H)$.
Given recent developments related to this work and interest in it, in particular \cite{pham,vergara}, 
we have elected to update our results and submit them for publication.

\section{On preduals of pseudo-measures and convolvers}\label{sec:predual}

The entire goal of the present section is to present background for the proofs of our two
main theorems.  We also redevelop some methods of Cowling \cite{cowling}, who gave an
innovative and insightful description of a predual of $CV_p(G)$.  The present methods
give a new perspective and help to illuminate the role of the Herz-Schur multipliers which
we discuss in \ref{ssec:HS}, below.

We let $p'$ be the conjugate index to $p$, so $\frac{1}{p}+\frac{1}{p'}=1$.  We recall that $\fB(L^p(G))$ is the
dual of the space $N^p(G)=L^{p'}(G)\hat{\otimes}L^p(G)$ by way of dual pairing
$\langle T,\xi\otimes f\rangle=\langle \xi,Tf\rangle=\int_G \xi(t)[Tf](t)\,dt$.  Elements 
$\ome=\sum_{n=1}^\infty \xi_n\otimes f_n$ of $N^p(G)$ may be viewed as
functions $\ome(s,t)=\sum_{n=1}^\infty \xi_n(s) f_n(t)$ 
up to marginal almost everywhere (m.a.e.)\ equivalence; i.e.\ excepting marginally null sets,
$(N_1\times G)\cup(G\times N_2)$, where $N_1$ and $N_2$ are Haar null sets.

We let $P:N^p(G)\to A_p(G)\subseteq C_0(G)$ be given for $s$ in $G$ by 
\[
P\left(\sum_{n=1}^\infty \xi_n\otimes f_n\right)(s)
=\sum_{n=1}^\infty\langle \xi_n,\lam_p(s)f_n\rangle=\sum_{n=1}^\infty \xi_n\ast\check{f}_n
\] 
where $\check{f}(t)=f(t^{-1})$ a.e., i.e.\ $P\ome(s)=\int_G \ome(t,s^{-1}t)\,dt$.  Hence $A_p(G)$
is normed as the coimage of $P$, i.e.\ $A_p(G)\cong N^p(G)/\ker P$ isometrically.
It is straightforward to see that
$PM_p(G)$ and $CV_p(G)$ admit the following pre-annihilators in $N^p(G)$:
\begin{align*}
^\perp PM_p(G)&=\ker P \\
^\perp CV_p(G)&=\wbar{\lin}\{\rho_{p'}(t^{-1})\xi\otimes f-\xi\otimes \rho_p(t)f: \\
&\phantom{mmmmmmm}t\in G,\xi\in L^{p'}(G),f\in L^p(G)\}
\end{align*}
where we note that adjoint of these right translation operators is given by $\rho_p(t)^*=\rho_{p'}(t^{-1})$.

It is standard that
\[
PM_p(G)\cong A_p(G)^*\text{ and }CV_p(G)\cong [N^p(G)/{^\perp CV_p(G)}]^*
\]
which gives us distinguished preduals of these algebras of operators.

\subsection{$N^p(G)$ as an algebra}
Herz \cite{herz1} showed that $A_p(G)$ is always a subalgebra of $C_0(G)$, and further showed
(\cite{herz2}) that it has Gelfand spectrum given by evaluations at points of $G$.  We note that
$A_p(G)$ is known as the {\it Fig\`{a}-Talamanca--Herz algebra}.
Herz's technique for showing that $A_p(G)$ is an algebra lifts naturally to $N^p(G)$.
We note that the amplification
$T\mapsto T\otimes I:\fB(L^p(G))\to \fB(L^p(G;L^p(G)))\cong \fB(L^p(G\times G))$  is an isometry.
We consider the {\it fundamental isometry} $W_p$ in $L^p(G\times G)$
given for a.e.\ $(s,t)$ by
\[
W_pf(s,t)=f(s,st).
\]
We note that $W_p$ is invertible with $W_p^*=W_{p'}^{-1}$.  We define a {\it co-product} on
$\fB(L^p(G))$, $\Gam:\fB(L^p(G))\to\fB(L^p(G\times G))$ by
\[
\Gam(T)=W_p^{-1}(T\otimes I)W_p.
\]
This spatially implemented map is evidently weak$^*$-weak$^*$ continuous, and hence admits a preadjoint. 
We note that $N^p(G)\otimes N^p(G)$ comprises a dense subspace of $N^p(G\times G)$
given on elementary tensors of elementary tensors (``really elementary tensors") by
$(\xi\otimes f)\otimes(\eta\otimes g)\mapsto (\xi\otimes \eta)\otimes (f\otimes g)$.
Let us compute $\Gam_*$ on these really elementary tensors.  We have
\begin{align*}
\langle T,\Gam_*((\xi\otimes f)\otimes&(\eta\otimes g))\rangle
=\langle W_p^{-1}(T\otimes I)W_p,(\xi\otimes f)\otimes(\eta\otimes g)\rangle \\
&=\langle W_{p'}(\xi\otimes\eta),(T\otimes I)W_p(f\otimes g)\rangle \\
&=\int_G\int_G \xi(s)\eta(st)(T\otimes I)[(s,t)\mapsto f(s)g(st)]\,ds\,dt \\
&=\int_G \langle \xi\eta_t,T(fg_t)\rangle \,dt
\end{align*}
where $\eta_t(s)=\eta(st)$, for example.  Hence we see that
\begin{equation}\label{eq:product}
\Gam_*((\xi\otimes f)\otimes(\eta\otimes g))=\int_G  (\xi\eta_t)\otimes(fg_t) \,dt.
\end{equation}
The equation
\[
\int_G\int_G  (\xi\eta_t\vartheta_s)\otimes(fg_th_s) \,dt\,ds=
\int_G\int_G  (\xi\eta_t\vartheta_{ts})\otimes(fg_th_{ts}) \,ds\,dt
\]
shows that  $\Gam_*$ is an associative product.  We note that though $\Gam(\lam_p(r))=\lam_p(r)\otimes\lam_p(r)$,
which implies that $\Gam|_{PM_p(G)}$ is cocommutative, we have for a multiplication operator
$M_\vphi$, where $\vphi\in L^\infty(G)$, that $\Gam(M_\vphi)=M_\vphi\otimes I$, which shows that 
$\Gam$ is not generally cocommutative.  Hence $\Gam_*$ is not commutative.

Let $\fK(G)$ denote the family of all compact neighbourhoods of the identity in $G$.  For $K$ in $\fK(G)$
we let $L^p(K)=1_KL^p(G)$, which is a 1-complemented subspace.  Likewise we define
$L^{p'}(K)$ and 
\[
N^p(K)=L^{p'}(K)\hat{\otimes}L^p(K)
\]
which is a subspace of $N^p(G)$.

\begin{proposition}\label{prop:ideals}
The space $^\perp PM_p(G)=\ker P$ is a $\Gam_*$-ideal in $N^p(G)$, while each of
$^\perp CV_p(G)$ and $N^p(K)$, for $K$ in $\fK(G)$, are right ideals.
\end{proposition}

\begin{proof}
That $P:N^p(G)\to C_0(G)$ is $\Gam_*$-pointwise multiplicative is shown in \cite{herz1}, hence
$^\perp PM_p(G)=\ker P$ is an ideal.  Let us consider the case of $^\perp CV_p(G)$, manually.
On really elementary tensors we have
\begin{align*}
\Gam_*(&(\rho_{p'}(s^{-1})\xi\otimes f-\xi\otimes \rho_p(s)f)\otimes (\eta\otimes g)) \\
&=\int_G[(\rho_{p'}(s^{-1})\xi)\eta_t\otimes fg_t-\xi\eta_t\otimes(\rho_p(s)f)g_t]\,dt \\
&=\int_G \rho_{p'}(s^{-1})(\xi\eta_{st})\otimes fg_t\;dt
-\int_G \xi\eta_t\otimes \rho_p(s)(fg_{s^{-1}t})\;dt \\
&=\int_G [\rho_{p'}(s^{-1})(\xi\eta_{st})\otimes fg_t-\xi\eta_{st}\otimes \rho_p(s)(fg_t)]\;dt
\end{align*}
which is an integral of elements in $^\perp CV_p(G)$.  That each $N^p(K)$ is a right
ideal follows readily from (\ref{eq:product}).  Indeed if $\xi\otimes f\in N^p(K)$ then
\begin{align*}
(1_K\otimes 1_K)\Gam_*((\xi\otimes f)\otimes(\eta\otimes g))&=\int_G1_K\xi\eta_t\otimes 1_Kfg_t\,dt \\
&=\Gam_*((\xi\otimes f)\otimes(\eta\otimes g))
\end{align*}
so $\Gam_*((\xi\otimes f)\otimes(\eta\otimes g))\in N^p(K)$.
\end{proof}

It will be convenient below, to consider the following space of elements.  We let
$L^1_{\mathrm{loc}}(G)$ denote the space of locally a.e.\ equivalence classes of measurable functions on
$G$ which are integrable on any compact set.  An application of H\"{o}lder's inequality shows that
$L^q(G)\subseteq L^1_{\mathrm{loc}}(G)$ for any $q$ in $[1,\infty]$.  If $h\in L^1_{\mathrm{loc}}(G)$
and $g\in C_c(G)$ then $h\ast g$ is well-defined as an element of $L^1_{\text{loc}}(G)$.
We let
\[
CV_p^1(G)=\bigl\{h\in L^1_{\mathrm{loc}}(G):\sup\{\|h\ast g\|_p:g\in C_c(G),\;\|g\|_p\leq 1\}<\infty\bigr\}.
\]
Each element $h$ of $CV_p^1(G)$ defines an operator $\lam_p(h)$ in $\fB(L^p(G))$.
By testing on elements $\rho_{p'}(s^{-1})g\otimes f-g\otimes \rho_p(s)f$ where $f,g\in C_c(G)$,
and applying a standard density argument,
we see that $\lam_p(h)\in CV_p(G)$.

The next result is really a repackaging of the main result of Cowling \cite{cowling}, and is used
extensively through the rest of this note.  The second part is a localization theorem.  

\begin{theorem}\label{theo:cowling}
\begin{enumerate}[\upshape (i)]
\item  Each $T$ in $CV_p(G)$ may be approximated in the strong operator topology by a bounded
net of elements  $\lam_p(h_i)$ where each $h_i\in CV_p^p(G)=CV_p^1\cap L^p(G)$.

\item  For each $K$ in $\fK(G)$, we have that 
\[
^\perp PM_p(G)\cap N^p(K)={^\perp CV_p(G)}\cap N^p(K).
\]
\end{enumerate}
\end{theorem}

\begin{proof}
(i)  Let $T\in CV_p(G)$.  Let $(f_i)\subset C_c(G)$ be a contractive approximate identity for
$L^1(G)$.  Then for $g$ in $C_c(G)$ we have 
\[
T(f_i\ast g)=T(\rho_p(\Del^{-1/p}\check{g})f_i)=
\rho_p(\Del^{-1/p}\check{g})T(f_i)=T(f_i)\ast g
\]
where $\check{g}(t)=g(t^{-1})$ a.e.  Notice then, that
\begin{align*}
&\|T(f_i)\ast g\|_p=\|T(f_i\ast g)\|_p\leq \|T\|\|f_i\ast g\|_p\leq \|T\|\|g\|_p \\
&\text{and }\|T(f_i)\ast g -Tg\|_p=\|T(f_i\ast g-g)\|_p\overset{i}{\longrightarrow}0.
\end{align*}
It follows that $h_i=T(f_i)$ gives the desired net of elements in $CV^p_p(G)$.

(ii)  Since $^\perp PM_p(G)\supseteq {^\perp CV_p(G)}$, it suffices to show that 
$^\perp PM_p(G)\cap N^p(K)\subseteq {^\perp CV_p(G)}$.  From (i), above,
it suffices to show that for any $h$ in $CV_p^1(G)$, that $\lam_p(h)$ annihilates 
$^\perp PM_p(G)\cap N^p(K)$.

Now let $\xi\otimes f\in N^p(K)$, and let $(f_n)$ be a sequence in $C_c(G)$ whose supports are in the
neighbourhood $KK^{-1}$ of $K$ and converges to $f$ in $L^p(G)$.  Then we have that
\begin{align*}
\langle \xi, \lam_p(h) f\rangle &=\lim_n \langle \xi, \lam_p(h) f_n\rangle=\lim_n\int_K\xi(t)h\ast f_n(t) \,dt \\
&=\lim_n \int_K\xi(t)\int_{K^2K^{-1}}h(s)f_n(s^{-1}t)\, ds\,dt \\
&=\lim_n\int_{K^2K^{-1}}h(s)\int_K \xi(t)f_n(s^{-1}t)\, dt\,ds \\
&=\lim_n\int_{K^2K^{-1}}h(s)P(\xi\otimes f_n)(s)\,ds \\
&=\int_{K^2K^{-1}} h(s)P(\xi\otimes f)(s)\, ds
\end{align*}
where each interchange of integrals is justified by Fubini's theorem, as $(\xi\otimes h)1_{K\times K^2K^{-1}}$
is integrable and each $f_n$ is bounded, and the limits are justified by $L^p$-convergence of $f_n$, then
uniform convergence of $P(\xi\otimes f_n)$.  Hence, if $\ome=\sum_{n=1}^\infty \xi_n\otimes f_n\in
{^\perp PM_p(G)}\cap N^p(K)$, then uniform convergence of $\sum_{n=1}^\infty P(\xi_n\otimes f_n)=P\ome=0$
provides that
\[
\langle \lam_p(h),\ome\rangle=\sum_{n=1}^\infty \langle \xi_n,\lam_p(h)f_n\rangle
=\sum_{n=1}^\infty \int_{K^2K^{-1}}h(s)P(\xi_n\otimes f_n)(s)\,ds=0
\]
which is the desired annihilation condition.
\end{proof}

We note that proving Theorem  \ref{theo:approx}  amounts, in effect, to showing that
$\lam_p(CV_p^1(G))\subseteq PM_p(G)$, even that $\lam_p(CV_p^p(G))\subseteq PM_p(G)$.
Attacking this directly seems very delicate; see Cowling's approach \cite{cowling0} for certain simple
connected Lie groups with real rank $1$.
We will proceed through a different sequence of observations.

\begin{corollary}\label{cor:cowling1}
\begin{enumerate}[\upshape (i)]
\item   The space $\bigcup_{K\in\fK(G)}{^\perp PM_p(G)}\cap N^p(K)$ is dense in $^\perp CV_p(G)$.

\item Every $\Gam_*$-commutator is an element of $^\perp CV_p(G)$.  Hence $^\perp CV_p(G)$
is a two-sided $\Gam_*$-ideal.
\end{enumerate}
\end{corollary}

\begin{proof}
(i) We observe that the family of elements
\[
\bigcup_{K\in\fK(G)}\{\rho_{p'}(s^{-1})\xi\otimes f-\xi\otimes \rho_p(s)f:s\in K,\xi\in L^{p'}(K),f\in L^p(K)\}
\]
is contained in $\bigcup_{K\in\fK(G)}{^\perp CV_p(G)}\cap N^p(KK^{-1}\cup K^2)$, and its linear span is dense in 
$^\perp CV_p(G)$.

(ii)  Any commutator
$\Gam_*(\ome\otimes\mu-\mu\otimes\ome)$ can be approximated by elements of the form 
$\Gam_*([1_K\otimes 1_K]\ome\otimes[1_K\otimes 1_K]\mu-[1_K\otimes 1_K]\mu\otimes[1_K\otimes 1_K]\ome)$
where $K\in\fK(G)$. Such elements are in $^\perp PM_p(G)\cap N^p(K)$, hence in $^\perp CV_p(G)$.

Now, if $\ome\in{^\perp CV_p(G)}$ and $\mu\in N^p(G)$ we have
\[
\Gam_*(\mu\otimes \ome)=\Gam_*(\ome\otimes\mu)+\Gam_*(\mu\otimes \ome-\ome\otimes\mu)\in{^\perp CV_p(G)}
\]
which establishes the second fact.
\end{proof}

Hence $\Gam_*$ induces a commutative product on 
\[
\wbar{A_p}(G)=N^p(G)/{^\perp CV_p(G)}
\]
which is a predual of $CV_p(G)$.  We let $\wbar{P}:N^p(G)\to\wbar{A_p}(G)$ denote the quotient map.
We note that there is a further natural quotient map $\wbar{P}(\ome)\mapsto P\ome:\wbar{A_p}(G)\to A_p(G)$.
It is shown by Cowling \cite{cowling} that this is the Gelfand transform.  In particular,
semisimplicty of $\wbar{A_p}(G)$ is equivalent to having $^\perp CV_p(G)={^\perp PM_p(G)}$,
i.e.\ $CV_p(G)=PM_p(G)$.

Now we let for $K$ in $\fK(G)$
\[
A_p(K)=P(N^p(K)).
\]
We remark that if $K$ is an open subgroup of $G$, then our definition of $A_p(K)$ coincides with the usual definition.
For $a$ in $A_p(K)$
\[
\|a\|_{A_p(K)}=\inf\{\|\ome\|_{N^p(G)}:\ome\in N^p(K),\;a=P(\ome)\}
\]
is a norm on $A_p(K)$.

The set $\bigcup_{K\in\fK(G)}A_p(K)$ is the algebra $A_{p,c}(G)$ of compactly supported elements of $A_p(G)$.
Indeed, if $u$ in $A_{p,c}(G)$ is supported on compact $S$, let $L\in\fK(G)$ and 
$u_L=\frac{1}{m(L)}1_{SL}\ast\check{1}_L$
is $1$ on $S$, hence $u=uu_L\in A_p(K)$ whenever $SL\cup L\subseteq K$.

The following will play a critical role in the proof of Lemma \ref{lem:haagerupk}, and hence in the proof
of Theorem \ref{theo:approx}.

\begin{corollary}\label{cor:cowling2}
Let $N^p_c(G)=\bigcup_{K\in\fK}N^p(K)$.  Then $\wbar{P}(N^p_c(G))$ is a dense ideal in $\wbar{A_p}(G)$
which is algebraically isomorphic to $A_{p,c}(G)$.  Furthermore, for $\ome$ in $N^p_c(G)$ we have
\[
\|\wbar{P}(\ome)\|_{\wbar{A_p}(G)}=\inf\{\|P(\ome)\|_{A_p(K)}:\ome\in N^p(K)\}
\]
and the pairing with $CV_p(G)$ depends only on $P(\ome)$; i.e.\ if $T\in CV_p(G)$ we may write
\begin{equation}\label{eq:dpair}
\langle T,\ome\rangle=\langle T,P\ome\rangle.
\end{equation}
\end{corollary}

\begin{proof}
It is evident that $N^p_c(G)$ is a dense right $\Gam_*$-ideal in $N^p(G)$, so
$\wbar{P}(N^p_c(G))$ is a dense ideal in $\wbar{A_p}(G)$, by part (ii) of the prior corollary.
We use part (i) of the prior corollary to see that
\begin{align*}
\|\wbar{P}(\ome)\|_{\wbar{A_p}(G)}&=\mathrm{dist}(\ome,{^\perp CV_p(G)}) \\
&=\inf_{K\in\fK(G)}\mathrm{dist}(\ome,{^\perp PM_p(G)}\cap N^p(K)) \\
&=\inf\{\|P(\ome)\|_{A_p(K)}:K\in\fK(G),\;\ome\in N^p(K)\}
\end{align*}
where we note that  the norm on $A_p(K)$ is given as the coimage:
$A_p(K)\cong N^p(K)/\ker P|_{N^p(K)}$, where $\ker P|_{N^p(K)}=
{^\perp PM_p(G)}\cap N^p(K)$.  Hence if, further, $P\ome=0$ then for $T$ in $CV_p(G)$
we have
\[
|\langle T,\ome\rangle|\leq \|T\|\|\bar{P}(\ome)\|_{\wbar{A_p}(G)}=0
\]
which establishes (\ref{eq:dpair}).
\end{proof}

We record the following  localization principle.
In terminology of Herz \cite{herz3}, (b), below, is the condition of being ``formally of compact support".

\begin{theorem}\label{theo:localization}
The following are equivalent:
\begin{enumerate}[\upshape (a)]
\item $CV_p(G)=PM_p(G)$;

\item for each  $u$ in $A_{p,c}(G)$ we have
\[
\|u\|_{A_p(G)}=\inf\{\|u\|_{A_p(K)}:K\in\fK(G),\;u\in A_p(K)\};\text{ and}
\]

\item there is $C>0$ such that for each  $u$ in $A_{p,c}(G)$ we have
\[
\inf\{\|u\|_{A_p(K)}:K\in\fK(G),\;u\in A_p(K)\}\leq C\|u\|_{A_p(G)}.
\]
\end{enumerate}
\end{theorem}

\begin{proof} That (a) implies (b) is immediate form the last corollary, whilst that (b) implies (c)
is obvious for $C\geq 1$.  If (c) holds, then on the dense subspace 
$A_{p,c}(G)$, which we may identify with $\wbar{P}(N^p_c(G))$, we
have that $\|\cdot\|_{A_p(G)}$ and $\|\cdot\|_{\wbar{A_p}(G)}$ are equivalent norms, so the map
$\ome+{^\perp CV_p(G)}\mapsto P(\ome)$ is injective, which implies that $^\perp CV_p(G)={^\perp PM_p(G)}$.
The latter fact gives (a).
\end{proof}

\subsection{The action of Herz-Schur multipliers}\label{ssec:HS}
Let [SQ$_p$] denote the class of spaces which are isometrically 
isomorphic to a subspace of a quotient of an $L^p$-space.
Kwapie\'{n} \cite{kwapien} noted that for $E$ in [SQ$_p$]  that the ampliation
$T\mapsto T\otimes I_E:\fB(L^p(G))\to
\fB(L^p(G;E))$ is an isometry.  In fact, he showed that this property characterizes elements of the class [SQ$_p$].
See Dales et al \cite{daleslot} for an exposition on this.
Hence, by duality, the map $\gam_E:L^{p'}(G;E^*)\hat{\otimes}L^p(G;E)\to N^p(G)$
given on elementary tensors m.a.e.\ by
\begin{equation}\label{eq:gammaE}
\gam_E(\Xi\otimes F)(s,t)=\langle \Xi(s),F(t)\rangle_{E^*,E}
\end{equation}
is a quotient map, in particular a contraction.   Also see Herz \cite{herz2}.
Let us define the {\it $p$-Herz-Schur multiplier} space by
\begin{align*}
\fM_p(G)=\{\vphi:G\to\Cee\;|&\;\vphi(st^{-1})=\langle\beta(s),\alp(t)\rangle\text{ where each} \\
&\alp:G\to E,\beta:G\to E^*\text{ is continuous}\\
&\text{and bounded for some }E\text{ in [SQ}_p]\}
\end{align*}
These were essentially described by Herz \cite{herz3,herz4}, and the specific version we are using here
was given by the first named author \cite{daws}.
As shown in \cite{daws}, $\fM_p(G)$ is the algebra of ``$p$-completely
bounded multipliers" of $A_p(G)$, and a Banach algebra with respect to the norm
\[
\|\vphi\|_{\fM_p}=\inf\{\|\beta\|_\infty\|\alp\|_\infty:
\vphi(st^{-1})=\langle \beta(s),\alp(t)\rangle\text{ as above}\}.
\]

\begin{proposition} \label{prop:HSaction}
\begin{enumerate}[\upshape (i)]
\item $N^p(G)$ is a Banach $\fM_p(G)$-module via the action given m.a.e.\ by
\[
\vphi\cdot \ome(s,t)=\vphi(st^{-1})\ome(s,t).
\]
\item  Each of $^\perp PM_p(G)$ and $N^p(K)$ ($K\in\fK(G)$) are $\fM_p(G)$-submodules,
hence so too is $^\perp CV_p(G)$.
\end{enumerate}
\end{proposition}

\begin{proof}
(i) This is an immediate application of the map $\gam_E$ in (\ref{eq:gammaE}), above.  
If $\vphi\in\fM_p(G)$ with $\vphi(st^{-1})=\langle \beta(s),\alp(t)\rangle$, and
$\xi\otimes f$ is an elementary tensor in $N^p(G)$, then let $\xi\beta(s)=\xi(s)\beta(s)$ in $L^{p'}(G;E^*)$ and
$f\alp(t)=f(t)\alp(t)$ in $L^p(G;E)$.  Notice that $\|\xi\beta\|_{p';E^*}\leq \|\xi\|_{p'}\|\beta\|_\infty$ and
$\|f\alp\|_{p;E}\leq \|f\|_p\|\alp\|_\infty$.  Then
\[
\vphi\cdot(\xi\otimes f)=\gam_E(\xi\beta\otimes f\alp)\in N^p(G)
\]
and hence
\[
\|\vphi\cdot(\xi\otimes f)\|_{N^p}\leq 
\|\xi\beta\|_{p';E^*}\|f\alp\|_{p;E}\leq \|\beta\|_\infty\|\alp\|_\infty\|\xi\|_{p'}\|f\|_p
\]
from which it follows that $\|\vphi\cdot(\xi\otimes f)\|_{N^p}\leq \|\vphi\|_{\fM_p}\|\xi\otimes f\|_{N^p}$.

(ii)  If $\vphi\in\fM_p(G)$ and $\ome\in\ker P={^\perp PM_p(G)}$ then
\[
P(\vphi\cdot\ome)(s)=\int_G\vphi(t(s^{-1}t)^{-1})\ome(t,s^{-1}t)\,dt=\vphi(s)P\ome(s)=0.
\]
It is facile that each $N^p(K)$ is an $\fM_p(G)$-submodule.  Thus each
$^\perp PM_p(G)\cap N^p(K)$ is an $\fM_p(G)$-submodule, and the result for
$^\perp CV_p(G)$ follows from the density result, Corollary \ref{cor:cowling1} (i), above.
\end{proof}

It is immediate that the action in (ii), above, induces actions of $\fM_p(G)$ on each of
$A_p(G)$ and $A_p(K)$ (each by pointwise multiplication) and on $\wbar{A_p}(G)$, 
making each a continuous module.  The spaces $A_p(K)$ are shown to be Banach $A_p(G)$-modules
in \cite{oztops}, by a method quite similar to that above.  We note that the natural map
$\wbar{P}(\ome)\mapsto P\ome:\wbar{A_p}(G)\to A_p(G)$ is an $\fM_p(G)$-module map.

We shall have need to consider the adjoint action on the dual spaces.  That $CV_p(G)$
is a $\fM_p(G)$-module seems also to be shown by Herz \cite{herz4}.

\begin{corollary}\label{cor:HSaction1}
The algebras $CV_p(G)$ and $PM_p(G)$ are dual Banach $\fM_p(G)$-modules:
\[
\langle \vphi\cdot T,\ome\rangle=\langle T,\vphi\cdot\ome\rangle.
\]
If $\vphi\in \fM_p(G)$ and $h\in CV_p^1(G)$, then $\vphi h\in CV_p^1(G)$ and 
\[
\vphi\cdot\lam_p(h)=\lam_p(\vphi h).
\]
\end{corollary}

\begin{proof}
The first statement being an evident adjoint module operation, we are left only to inspect the second statement.
However, if $\xi\otimes f$ is an elementary tensor in $N^p_c(G)$, i.e.\ in $N^p(K)$ for some $K$, 
then arguments just as in the proof
of Theorem \ref{theo:cowling} (ii) provide that
\begin{align*}
\langle \vphi\cdot\lam_p(h), \xi\otimes f\rangle 
&=\langle \lam_p(h),\vphi\cdot (\xi\otimes f)\rangle \\
&=\int_{KK^{-1}}  h(s) P(\vphi\cdot(\xi\otimes f))(s)\,ds \\
&=\int_{KK^{-1}} \vphi(s)h(s)\int_K \xi(t)f(s^{-1}t)\, dt\,ds \\
&=\langle \xi,(\vphi h)\ast f\rangle
\end{align*}
where the last interchange of integrals is justified as 
\[
(s,t)\mapsto |\vphi(s)h(s)\xi(t)f(s^{-1}t)| 
\]
is  integrable thanks to H\"{o}lder's inequality and Tonelli's theorem.  If $f\in C_c(G)$ then
\begin{align*}
\|(\vphi h)\ast f\|_p&=\sup\{|\langle \vphi\cdot\lam_p(h), \xi\otimes f\rangle|:\xi\in L^p(K),K\in\fK(G),\|\xi\|_p\leq 1\} \\
&\leq \|\vphi\cdot \lam_p(h)\|\|f\|_p. %\leq \|\vphi\|_{\fM_p}\|\lam_p(h)\|\|f\|_p.
\end{align*}
Then taking supremum over such $f$ with $\|f\|_p\leq 1$, reveals that $\vphi h\in CV_p^1(G)$, and
further gives the desired formula.
\end{proof}

We note that $A_p(G)\subseteq \fM_p(G)$, is an ideal within the latter space, 
and the inclusion is a contractive map.  Indeed
if $a=\sum_{n=1}^\infty \langle \xi_n,\lam_p(\cdot)f_n\rangle$ in $A_p(G)$, with 
$\sum_{n=1}^\infty\|\xi_n\|_{p'}\|f_n\|_p<\infty$ and each $\|\xi_n\|_{p'}\|f_n\|_p>0$, let
$\xi_n'=\|\xi_n\|_{p'}^{-1/p}\|f_n\|_p^{1/p'}\xi_n$ and $f_n'=\|\xi_n\|_{p'}^{1/p}\|f_n\|_p^{-1/p'}f_n$.
Then $\beta(s)=(\lam_{p'}(s^{-1})\xi_n')_{n=1}^\infty$ in $\ell^{p'}(\En,L^{p'}(G))$ and 
$\alp(t)=(\lam_p(t^{-1})f_n')_{n=1}^\infty$ in $\ell^p(\En,L^p(G))$ show that
$a\in\fM_p(G)$, with $\|a\|_{\fM_p}\leq \|a\|_{A_p(G)}$.

Hence we get the following crucial result which will help us in the proof of Theorem \ref{theo:approx}.

\begin{corollary}\label{cor:HSaction2}
Let $T\in CV_p(G)$ and $a\in A_p(G)$.  Then $a\cdot T\in PM_p(G)$.
\end{corollary}

\begin{proof}
By a norm estimate, we may assume that $a\in A_{p,c}(G)$.  Furthermore, Theorem \ref{theo:cowling} (i)
provides that $T$ is a weak* limit of $\lam_p(h_i)$ for some $h_i$ in $CV_p^1(G)$.  
Then the prior corollary provides that each $a\cdot\lam_p(h_i)=\lam_p(ah_i)$, where $ah_i\in L^1_{\mathrm{loc}}(G)$
with compact support, hence $ah_i\in L^1(G)$, so $\lam_p(ah_i)\in PM_p(G)$.
Thus $a\cdot T=\lim_i a\cdot\lam_p(h_i)=\lim_i \lam_p(ah_i)\in PM_p(G)$.
\end{proof}

\begin{remark}\label{rem:weakamen}
We say that $G$ is {\it $p$-weakly amenable} whenever $A_p(G)$ -- if we like $A_{p,c}(G)$ -- admits
a net $(\vphi_i)$ of elements which is bounded in $\fM_p(G)$ and tends to $1$
uniformly on compact sets.  For $p=2$, this is the property of weak amenability of de Canni\`{e}re and
Haagerup \cite{decanniereh}.  Remarks in the proof of Theorem \ref{theo:approx}, in Section \ref{sec:proof1} below,
show that the assumption of $2$-weak amenability is sufficient to obtain general $p$-weak amenability.

Suppose that $G$ is $p$-weakly amenable.
Using the density of $N^p_c(G)$ in $N^p(G)$, it is easy
to conclude that $\vphi_i\cdot \ome$ tends in norm to $\ome$, for any $\ome$ in $N^p(G)$.
Hence, by the last corollary, any $T$ in $CV_p(G)$ is approximated weak* by the (bounded)
net of elements $\vphi_i\cdot T$ from $PM_p(G)$.  Hence we have proved 
Theorem \ref{theo:approx} for such groups.  For amenable groups, this is the proof of Herz \cite{herz3}.
For weakly amenable groups, this proof is hinted at by Cowling \cite{cowling}.
\end{remark}

\section{Proof of the approximation result, Theorem \ref{theo:approx}}\label{sec:proof1}

We recall that $\fM_2(G)$ is a dual space.  We identify $L^1(G)$ with its evident image
in $\fM_2(G)^*$, acting by integration, and let $Q(G)=\wbar{L^1(G)}^{\fM_2(G)^*}$, the closure
of $L^1(G)$ in $\fM_2(G)^*$.  De Canni\`{e}re and Haagerup \cite{decanniereh} show that
$\fM_2(G)\cong Q(G)^*$, and thus we define a weak*-topology on $\fM_2(G)$.

As in Haagerup and Kraus \cite{haagerupk}, we say that $G$ admits the {\it approximation property} if
$1\in\overline{A_p(G)}^{w*}$, i.e.\ there is a net $(\vphi_i)\subset A_2(G)$ 
-- we may suppose, in fact that $(\vphi_i)\subset A_{2,c}(G)$ -- 
for which $\langle 1,q\rangle=\lim_i\langle \vphi_i,q\rangle$ for $q$ in $Q(G)$.

The following claim is implicit in \cite{haagerupk} (see notes prior to Proposition 1.3, there), 
but the proof offered seems incomplete since
it is not known if translations on $\fM_2(G)$ are continuous in the translating variable.

\begin{lemma}\label{lem:convolution}
If $f\in L^1(G)$ and $\vphi\in \fM_2(G)$, then $f\ast \vphi\in \fM_2(G)$.
\end{lemma}

\begin{proof}
Since we have a contractive embedding
$\fM_2(G)\hookrightarrow L^\infty(G)$, we have that for $g$ in $L^1(G)$ that $\|g\|_Q\leq
\|g\|_1$.  Further $\fM_2(G)$ is closed under left translations, and translation operators are isometries,
so $Q(G)$ is a homogeneous space for left translation, and thus
admits left convolution by elements of $L^1(G)$.  Also,
$\fM_2(G)$ consists of weakly almost periodic functions (see the observation of Xu \cite{xu})
and hence of uniformly continuous functions, so $f\ast \vphi$ makes sense as a uniformly
continuous function.  Now we have
\begin{align*}
&\sup\left\{\left|\int_G f\ast \vphi(s)g(s)\,ds\right|:g\in L^1(G),\;\|g\|_Q\leq 1\right\} \\
&\phantom{m}=\sup\left\{\left|\int_G  \vphi(s)\tilde{f}\ast g(s)\,ds\right|:g\in L^1(G),\;\|g\|_Q\leq 1\right\} \\
&\phantom{m}\leq \|f\|_1\sup\left\{\left|\int_G  \vphi(s) g(s)\,ds\right|:g\in L^1(G),\;\|g\|_Q\leq 1\right\}
=\|f\|_1\|\vphi\|_{\fM_2}
\end{align*}
where $\tilde{f}(t)=\Del(t^{-1})f(t^{-1})$.  Hence $f\ast\vphi\in\fM_2(G)$.
\end{proof}

The fact that Hilbert spaces, which are exactly the [SQ$_2$]-spaces, are also [SQ$_p$] spaces (see \cite{kwapien})
gives a contractive embedding $\fM_2(G)\hookrightarrow\fM_p(G)$.

The following construction is modelled after Proposition 1.3 of \cite{haagerupk}.

\begin{lemma}\label{lem:haagerupk}
Fix $a$ in $A_{p,c}(G)$ with $a\geq 0$ and $\int_G a=1$.  For $T$ in $CV_p(G)$ and $\ome$ in $N^p(G)$ let
\[
q_{T,\ome}\in\fM_2(G)^*\text{ be given by }q_{T,\ome}(\vphi)=\langle T,(a\ast \vphi)\cdot\ome\rangle.
\]
Then $q_{T,\ome}\in Q(G)$.
\end{lemma}

\begin{proof}
It is evident that $\|q_{T,\ome}\|_{\fM_2^*}\leq \|T\|\|\ome\|_{N^p(G)}$.  Hence by norm approximation, 
it suffices to show that if $\ome\in N^p_c(G)$, say $\ome\in N^p(K)$ for some $K$ in $\fK(G)$, then
$q_{T,\ome}(\vphi)=\int_G \vphi g$ for some $g$ in $L^1(G)$.  
We may further assume $K$ is so large that $a\in A_p(K)$.
With these assumptions, let
$S=\mathrm{supp}(a)^{-1}\mathrm{supp}(P\ome)$, and we have
\begin{align*}
(a\ast\vphi)(s)P\ome(s)&=\int_G a(t)\vphi(t^{-1}s)P\ome(s)\,dt \\
&=\int_G a(t)1_S(t^{-1}s)\vphi(t^{-1}s)P\ome(s)\,dt \\
&=\int_G 1_S(t^{-1})\vphi(t^{-1})a_t(s)P\ome(s)\,dt.
\end{align*}
Let $L=KK^{-1}S^{-1}\cup KK^{-1}$ in $\fK(G)$, which contains the support of each
$a_tP\ome$.
We note that $t\mapsto a_t:S^{-1}\to A_p(L)$ is continuous.  Indeed, on elementary tensors
$\xi\otimes f$ in $N^p(K)$, we have that
$(\xi\ast \check{f})_t=\xi\ast(\lam_p(t)f)^\vee$ and left translation is continuous on elements of $L^p(G)$. 
Hence we realize
\[
(a\ast \vphi)P\ome=\int_G1_S(t^{-1})\vphi(t^{-1})[a_t\,P\ome]\;dt
\]
as a Bochner integral in $A_p(L)$, i.e.\ respecting the norm $\|\cdot\|_{A_p(L)}$.  Then let
\[
g(t)=1_S(t)\Del(t^{-1})\langle T,a_{t^{-1}}\, P\ome\rangle
\]
which defines an element of $L^1(G)$.  We may thus use the dual pairing (\ref{eq:dpair}) to compute
\begin{align*}
\int_G \vphi g&=\int_G 1_S(t^{-1})\vphi(t^{-1})\langle T,a_t\, P\ome\rangle\,dt \\
&=\left\langle T,\int_G 1_S(t^{-1})\vphi(t^{-1})[a_t\,P\ome]\right\rangle=\langle T, (a\ast \vphi)\cdot\ome\rangle.
\end{align*}
This establishes the desired claim.
\end{proof}

\begin{proof}[Proof of Theorem \ref{theo:approx}]
Let $a$ be as in Lemma \ref{lem:haagerupk} and $(\vphi_i)\subset A_{2,c}(G)$ be a net converging
weak* to 1.  Note that $A_{2,c}(G)\subseteq \fM_2(G)\subseteq \fM_p(G)$, and hence $A_{2,c}(G)$
is a space of compactly supported elements of $\fM_p(G)$.  The argument just before Corollary \ref{cor:cowling2}
tells us that any compactly supported element of $\fM_p(G)$ is in $A_{p,c}(G)$.  Since $A_p(G)$ 
is a homogeneous space for left translations, we find that each $a\ast \vphi_i\in A_p(G)$.

Given $T$ in $CV_p(G)$, let $q_{T,\ome}$ be as in Lemma \ref{lem:haagerupk}.  Then we see that
\begin{align*}
\langle (a\ast\vphi_i)\cdot T,\ome\rangle
&=\langle T,(a\ast\vphi_i)\cdot\ome\rangle=q_{T,\ome}(\vphi_i) \\
&\overset{i}{\longrightarrow} q_{T,\ome}(1)
=\langle T,(a\ast 1)\cdot\ome\rangle
=\langle T,\ome\rangle
\end{align*}
since $a\ast 1=\left[\int_G a\right]1=1$, by assumption on $a$.  But Corollary \ref{cor:HSaction2}
provides that each $(a\ast\vphi_i)\cdot T\in PM_p(T)$, and hence so too the weak* limit of this net, $T$,
is also in $PM_p(G)$.
\end{proof}

Hints of the Theorem \ref{theo:approx} are given by Cowling \cite{cowling}.  Given how heavily we
have exploited his results, i.e.\ Theorem \ref{theo:cowling}, we suspect he knew how to prove this.

\begin{remark}\label{rem:vergara}
The space $\fM_p(G)$ generally has a predual $Q_p(G)$, as was verified by T. Miao in an unpublished
manuscript.  Using this, An, Lee and Ruan \cite{anlr} defined the {\it $p$-approximation property} ($p$-AP), which is
having $1$ in the weak* closure of $A_p(G)$ in $\fM_p(G)$.  As verified by Vergara \cite{vergara},
$p$-AP is equivalent to $p'$-AP, and if $2\leq p\leq q<\infty$, then $p$-AP implies $q$-AP.  Hence
$p$-AP for $p\not=2$ is ostensibly more general than approximation property ($2$-AP).
However, it is shown in \cite{vergara} that certain canonical higher rank simple Lie groups fail $p$-AP
for any $p$ -- the same ones known to  Lafforgue and de la Salle \cite{lafforguedls} and
Haagerup and de Laat \cite{haagerupdl1} --
giving convincing evidence that, at least for connected groups and lattices within,
that $p$-AP is equivalent to $2$-AP.  No examples are known of groups admitting $p$-AP
for some $p$, but not $2$-AP.

Nonetheless, it is simple to modify the Lemmas of this section to accomodate the
assumption of $p$-AP.  We refer the reader \cite{vergara} for details.
\end{remark}

\section{Proof of the commutation results,  Theorem \ref{theo:doublecommutant}}\label{sec:bicommutant}

Theorem \ref{theo:cowling}, above, provides a very useful approximation of a convoluter $T$ in $CV_p(G)$
by a bounded net of operators $(\lam_p(h_i))$ where each $h_i\in CV_p^p(G)$.  If we were
conducting approximations for $p=2$, then we need only consider self adjoint $T=T^*$ and we need not worry
about determining the structure of $\lam_p(h_i)^*$.  For $p\not=2$ we have no such luck.  We overcome
this difficulty below.  Furthermore, in the spirit of Theorem \ref{theo:cowling}, we give a careful
pointwise approximation of a convoluter, on a dense subspace, by convolutions with elements of $L^1(G)$.

We note that $f\mapsto \til{f}$, where $\til{f}(t)=\Del(t^{-1})f(t^{-1})$ for (locally) a.e.\ $s$, defines an isometric involution
on $L^1(G)$, and clearly defines a linear map on $L^1_{\mathrm{loc}}(G)$ which satisfies
$(h\ast f)^\sim=\til{f}\ast\til{h}$ for any convolvable pair of elements.  
Hence if $(f_i)$ is a contractive approximate identity on $L^1(G)$, then so too is 
$(\til{f}_i)$.  Furthermore, if $f\in L^1(G)$, then $\lam_p(f)^*=\lam_{p'}(\til{f})$, as is standard to check.
We also observe that since $\rho_p(s)^*=\rho_{p'}(s^{-1})$ for all $s$ in $G$, we have
$\{T^*:T\in CV_p(G)\}=CV_{p'}(G)$.

\begin{theorem}\label{theo:convolveradj}
Let $T\in CV_p(G)$. 

\begin{enumerate}[\upshape (i)]
\item   Then there is a net of elements $(k_i)$ in $CV_p^p(G)$ for which
$(\lam_p(k_i))$ is a bounded net in $CV_p(G)$ converging strong operator to $T$, and also
$(\til{k}_i)$ is a net in $CV_{p'}^{p'}(G)$ for which $(\lam_{p'}(\til{k}_i))$ is bounded
in $CV_{p'}(G)$ and converges to $T^*$.

\item Given $g,f$ in $C_c(G)$ and $\eps>0$, there is $k$ in $C_c(G)$ for which
\[
\|Tf-\lam_p(k)f\|_p<\eps\text{ and }\|T^*g-\lam_{p'}(\til{k}) g\|_{p'}<\eps.
\]
Here $k$ is dependant upon the choices of $g,f$ and $\eps$.
\end{enumerate}
\end{theorem}

\begin{proof}
(i)  As in Theorem \ref{theo:cowling} we let $(f_i)\subset C_c(G)$ be a contractive approximate
identity for $L^1(G)$ and set $h_i=Tf_i$.  We note that
\[
\lam_p(h_i)=\lam_p(Tf_i)=T\lam_p(f_i).
\]
Let $g,f\in C_c(G)$.  Since $\til{h}_i\in L^1_{\mathrm{loc}}(G)$, $\til{h}_i\ast g\in L^1_{\mathrm{loc}}(G)$
and, letting $S=\mathrm{supp}f\cup\mathrm{supp} g$, we have
\begin{align*}
\int_G \til{h}_i\ast g(s)f(s)\, ds &=\int_S\int_{SS^{-1}} \Del(t^{-1})h_i(t^{-1})g(t^{-1}s)f(s)\, dt\, ds \\
&=\int_{SS^{-1}} \int_S g(s)h_i(t)f(t^{-1}s)\, ds\, dt \\
&=\langle g,\lam_p(h_i)f\rangle=\langle g,T\lam_p(f_i)f\rangle
=\langle \lam_{p'}(\til{f}_i)T^*g,f\rangle.
\end{align*}
Hence by density it follows that $\til{h}_i\ast g=\lam_{p'}(\til{f}_i)T^*g$, and hence $\til{h}_i\in CV_{p'}^1(G)$
with 
\[
\lam_p(\til{h}_i)=\lam_{p'}(\til{f}_i)T^*.
\]
We do not know of a means to
see that $\til{h}_i\in CV_{p'}^{p'}(G)$.  We use another convolution to overcome this difficulty.

Let $k_i=f_i\ast h_i$.  It is clear that $k_i\in CV_p^p(G)$ with $\lam_p(k_i)=\lam_p(f_i)\lam_p(h_i)$,
and hence $\|\lam_{p'}(\til{k}_i)\|\leq \|f_i\|_1\|\lam_p(h_i)\|$.  Likewise
$\til{k_i}=\til{h}_i\ast\til{f}_i\in CV_{p'}^1(G)$ with $\lam_{p'}(\til{k_i})=\lam_{p'}(\til{h}_i)\lam_p(\til{f}_i)$, with
$\|\lam_{p'}(\til{k}_i)\|\leq \|\lam_{p'}(\til{h}_i)\|\|f_1\|_1$.
Moreover, as $T^*\in CV_{p'}(G)$ we have that $T^*(\til{f}_i)\in CV_{p'}^{p'}(G)$ and 
\[
\lam_{p'}(\til{k_i})=\lam_{p'}(\til{h}_i)\lam_p(\til{f}_i)=\lam_{p'}(\til{f}_i)T^*\lam_p(\til{f}_i)
=\lam_{p'}(\til{f}_i\ast T^*\til{f}_i)
\]
so $\til{k_i}=\til{f}_i\ast T^*(\til{f}_i)\in CV_{p'}^{p'}(G)$ too.  We have that
\begin{align*}
\|\lam_p(k_i)f-Tf\|_p&=\|f_i\ast [Tf_i\ast f]-Tf\|_p \\
&\leq \|f_i\ast [Tf_i\ast f]-f_i\ast Tf\|_p+\|f_i\ast Tf-Tf\|_p \\
&\leq \|f_i\|_1 \|Tf_i\ast f-Tf\|_p+\|f_i\ast Tf-Tf\|_p
\end{align*}
which tends to zero.  By uniform boundedness of the operators $\lam_p(k_i)$ and density
of $C_c(G)$ in $L^p(G)$, we see that $\lam_p(k_i)$ tends to $T$ in the strong operator topology.
Similarly, the net $(\lam_{p'}(\til{k_i}))$ is bounded and tends to $T^*$ in the strong operator topology.

(ii)  Let $\del>0$.  Let $i$ be such that
\[
\|Tf-k_i\ast f\|_p<\del\text{ and }\|T^*g-\til{k}_i\ast g\|_{p'}<\del.
\]
Inner regularity of the Haar measure provides $K$ in $\fK(G)$ for which
\[
\|k_i-1_Kk_i\|_p<\del\text{ and }\|\til{k}_i-1_{K^{-1}}\til{k}_i\|_{p'}<\del.
\]
Let $k=1_Kk_i$ which is in $L^1\cap L^p(G)$, and note that $\til{k}=1_{K^{-1}}\til{k}_i$.  Since
$l\ast f=\rho_p(\Del^{-1/p}\check{f})l$ for $l$ in $L^p(G)$, we have that
\begin{align*}
\|Tf-k\ast f\|_p&\leq \|Tf-k_i\ast f\|_p+\|k_i\ast f-k\ast f\|_p \\
&< \del +\|\rho_p(\Del^{-1/p}\check{f})\|\del.
\end{align*}
Likewise we have that
\[
\|T^*g-\til{k}\ast g\|_{p'}< \del+\|\rho_{p'}(\Del^{-1/p'}\check{g})\|\del.
\]
Hence an obvious choice of $\del$ yields the desired inequalities.
\end{proof}

We note that the sorts of global estimates found in (i), above, were required to obtain the pointwise estimates of (ii).

\begin{remark}\label{rem:rightestimates}
As noted in the introduction, there is a self-inverting isometry $U$ on $L^p(G)$, which intertwines
$\lam_p$ and $\rho_p$:  $U\lam_p(s)=\rho_p(s)U$ for $s$ in $G$.  Hence $U^*$ intertwines
$\lam_{p'}$ and $\rho_{p'}$.  We let $CV_p'(G)=\lam_p(G)'$ so
\[
CV_p'(G)=(U\rho_p(G)U)'=U\rho_p(G)'U=U\,CV_p(G)\,U.
\]
If $S\in CV_p'(G)$ and $g,f\in C_c(G)$, then $U^*g,Uf\in C_c(G)$.  Hence, given $\eps>0$ Theorem
\ref{theo:convolveradj} (ii) provides $k'$ in $L^1(G)$ for which
\[
\|USUUf-\lam_p(k')Uf\|_p<\eps\text{ and }\|(USU)^*U^*g-\lam_{p'}(\til{k'})U^*g\|_{p'}<\eps
\]
hence multiplying the arguments of the expressions by the respective isometries $U$ and $U^*$
we obtain
\[
\|Sf-\rho_p(k')f\|_p<\eps\text{ and }\|S^*g-\rho_{p'}(\til{k'}) g\|_{p'}<\eps.
\]
\end{remark}

The key reason we have Theorem \ref{theo:convolveradj}, is that for $k,k'$ in $L^1(G)$ we
have $\lam_p(k)\rho_p(k')=\rho_p(k')\lam_p(k)$.  We do not know how to verify
directly that $\lam_p(h)\rho_p(h')=\rho_p(h')\lam_p(h)$ for $h,h'$ in $CV_p^p(G)$.

\begin{proof}[Proof of Theorem \ref{theo:doublecommutant}]
Let $PM_p'(G)=U\, PM_p(G) \, U=\wbar{\lin}^{w*}\rho_p(G)$.  We have that
\[
PM_p(G)'=\lam_p(G)'=CV_p'(G)\supseteq PM_p'(G)\text{ and  }PM'_p(G)'=CV_p(G).
\]
Thus it follows that 
\[
PM_p(G)''=CV_p'(G)'\subseteq PM'_p(G)'=CV_p(G).
\]
Hence we will be done once we show that $CV_p(G)\subseteq CV_p'(G)'$, i.e.\ if $T\in CV_p(G)$
and $S\in CV'_p(G)$, we wish to see that $TS=ST$.

To this end, fix $g,f$ in $C_c(G)$.  As in Theorem \ref{theo:convolveradj} (ii) and Remark \ref{rem:rightestimates},
given $\eps>0$, find $k,k'$ in $L^1(G)$ for which
\begin{align*}
\|Tf-\lam_p(k)f\|_p<\eps&\text{ and }\|T^*g-\lam_{p'}(\til{k}) g\|_{p'}<\eps \\
\|Sf-\rho_p(k')f\|_p<\eps&\text{ and }\|S^*g-\rho_{p'}(\til{k'}) g\|_{p'}<\eps.
\end{align*}
Then
\begin{align*}
|\langle &g,TSf\rangle-\langle g,\lam_p(k)\rho_p(k')f\rangle| \\
&\leq |\langle T^*g,Sf-\rho_p(k')f\rangle|
+|\langle T^*g-\lam_{p'}(\til{k})g,\rho_p(k')f\rangle| \\
&\leq\|T\|\|g\|_{p'}\eps+\eps\|\rho_p(k')f\|_p\leq (\|T\|\|g\|_{p'}+\|S\|\|f\|_p+\eps)\eps
\end{align*}
and, by a similar calculation, we see that
\[
|\langle g,STf\rangle-\langle g,\rho_p(k')\lam_p(k)f\rangle|
\leq (\|S\|\|g\|_{p'}+\|T\|\|g\|_p+\eps)\eps.
\]
Since $\lam_p(k)\rho_p(k')=\rho_p(k')\lam_p(k)$ for any $k,k'$ in $L^1(G)$,
we can take $\eps$ to $0$ and see that
\[
\langle g,TSf\rangle=\langle g,STf\rangle.
\]
Hence by density we see that $\langle \xi,TSf\rangle=\langle \xi,STf\rangle$ for
any $\xi$ in $L^{p'}(G)$ and $f$ in $L^p(G)$, so $TS=ST$, as desired.
\end{proof}

We obtain an ``elementary" proof of Dixmier's result \cite{dixmier}, i.e.\ 
without use of left  Hilbert algebras.  The
common algebra in the result below is usually denoted $VN(G)$ and called
the {\it group von Neuman algebra}.

\begin{corollary}\label{cor:dixmier}
We have that 
\[
CV_2(G)=CV'_2(G)'=PM_2(G)=PM'_2(G)'.
\]
\end{corollary}

\begin{proof}
Here $\lam_2(G)$ is a self-adjoint subset of $\fB(L^2(G))$, so $VN(G)=PM_2(G)$ is self-adjoint.
The bicommutant theorem of von Neumann, \cite[II.3.9]{tak1} for 
example, tells us that $PM_2(G)''$ is the $\sigma$-strong$^*$ closure of $PM_2(G)$.
As a convex subspace of $\fB(L^2(G))$ is weak$^*$-closed if and only if it is
$\sigma$-strong$^*$ closed (see \cite[II.2.6]{tak1}), we see that
$PM_2(G)'' = PM_2(G)$.  The rest follows from the theorem above.
\end{proof}

%%END MAIN BODY

\subsection*{Acknowledgements}
The second named researcher was partially supported by an NSERC Discovery Grant.

%%BIBLIOGRAPHY


\begin{thebibliography}{3HD}

%\bibitem[Aut]{author_tag}
%J.-P. Author.
%Title. 
%Journal issue (year), no., pps. 

\bibitem{anlr}
G. An, J.-J. Lee and Z.-J. Ruan,
\emph{On $p$-approximation properties for p-operator spaces},
J.~Funct. Anal. 259 (2010), no.\ 4, 933--974.

\bibitem{cowling0}
M. Cowling, 
\emph{La synt\`{e}se des convoluteurs de $L^p$ de certains groupes pas moyennables}, 
Boll. Un. Mat. Ital. A (5) 14 (1977), no.\ 3, 551--555. 


\bibitem{cowling}
M. Cowling,
\emph{The predual of the space of convolutors on a locally compact group},
Bull. Austral. Math. Soc. 57 (1998), no.\ 3, 409--414.

\bibitem{daleslot}
H. G. Dales, N. J. Laustsen, T. Oikberg and V. G. Troitsky,
\emph{Multi-norms and Banach lattices}, 
Dissertationes Math. (Rozprawy Mat.), 524 (2017), 115 pages.

\bibitem{daws}
M. Daws,
\emph{$p$-operator spaces and Fig\`{a}-Talamanca--Herz algebras}, 
J.~Operator Theory 63 (2010), no.\ 1, 47--83.

\bibitem{dawss}
M. Daws and N. Spronk,
\emph{The approximation property implies that convolvers are pseudo-measures},
unpublished, see {\tt arXiv:1308.1073v3 [math.FA]}.

\bibitem{decanniereh}
J. De Canni\`{e}re and U. Haagerup, 
\emph{Multipliers of the Fourier algebras of some simple Lie groups and their discrete subgroups}, 
Amer.~J. Math. 107 (1985), no.\ 2, 455--500.


\bibitem{dixmier}
J. Dixmier,
\emph{von Neumann algebras},
North-Holland Mathematical Library, 27. North-Holland Publishing Co., Amsterdam-New York, 1981.

\bibitem{haagerupk}
U. Haagerup and J. Kraus,
\emph{Approximation properties for group C*-algebras and group von Neumann algebras}, 
Trans. Amer. Math. Soc. 344 (1994), no.\ 2, 667--699.

\bibitem{haagerupkdl}
U. Haagerup, S. Knudby and T. De Laat,
\emph{A complete characterization of connected Lie groups with the approximation property},
Ann. Sci. \'{E}c. Norm. Sup\'{e}r. (4) 49 (2016), no.\ 4, 927--946.

\bibitem{haagerupdl1}
U. Haagerup and T. de Laat,
\emph{Simple Lie groups without the Approximation Property}, 
Duke Math. J. 162 (2013), no.\ 5, 925--964.

\bibitem{haagerupdl2}
U. Haagerup and T. de Laat,
\emph{Simple Lie groups without the Approximation Property II}, 
Trans. Amer. Math. Soc. 368 (2016), no,\ 6, 3777--3809.

\bibitem{herz1}
C. S. Herz,
\emph{Le rapport entre l'alg\`{e}bre $A_p$ d'un groupe et d'un sousgroupe},
C. R. Acad. Sci. Paris S\'{e}r. A-B 271 (1970), A244--A246.

\bibitem{herz2}
C. S. Herz,
\emph{The theory of $p$-spaces with an application to convolution operators},
Trans. Amer. Math. Soc. 154 (1971), 69--82.

\bibitem{herz3}
C. S. Herz,
\emph{Harmonic synthesis for subgroups},
 Ann. Inst. Fourier (Grenoble) 23 (1973), no.\ 3, 91--123.

\bibitem{herz4}
C. S. Herz,
\emph{Une g\'{e}n\'{e}ralisation de la notion de transform\'{e}e de Fourier-Stieltjes},  
Ann. Inst. Fourier (Grenoble) 24 (1974), no.\ 3,  145--157. 

\bibitem{kwapien}
S. Kwapie\`{n},
\emph{On operators factorizable through $L_p$ space},
Actes du Colloque d'Analyse Fonctionnelle de Bordeaux (Univ. de Bordeaux, 1971), pp. 215--225, 
Bull. Soc. Math. France, M\'{e}m. No. 3--32, Soc. Math. France, Paris, 1972.

\bibitem{lafforguedls}
V. Lafforgue and M. de la Salle, 
\emph{Noncommutative $L^p$-spaces without the completely bounded approximation property}, 
Duke Math. J. 160 (2011), no.\ 1, 71--116.

\bibitem{leptin}
H. Leptin,
\emph{Sur l'alg\`{e}bre de Fourier d'un groupe localement compact},
C. R. Acad. Sci. Paris \'{e}r. A-B 266 (1968), A1180--A1182.

\bibitem{oztops}
S. \"{O}ztop and N. Spronk.
\emph{$p$-operator space structure on Feichtinger--Fig\`{a}-Talamanca--Herz Segal algebras}, 
J. Operator Theory 74 (2015), no.\ 1, 45--74.

\bibitem{pham}
H. L. Pham,
\emph{A short self-contained proof of the commutation theorem},
Studia Math. 237 (2017), no.\ 2, 195--200.

\bibitem{tak1}
M. Takesaki, 
{\it Theory of operator algebras. I}, 
reprint of the first (1979) edition, Encyclopaedia of Mathematical Sciences, 124, 
Springer-Verlag, Berlin, 2002.

\bibitem{vergara}
I. Vergara,
\emph{The $p$-approximation property for simple Lie groups with finite center},
preprint, see {\tt arXiv1609.07632}.

\bibitem{xu}
G. Xu,
\emph{Amenability and uniformly continuous functionals on the algebras $A_p(G)$ for discrete groups}, 
Proc. Amer. Math. Soc. 123 (1995), no 11, 3425--3429. 







\end{thebibliography}
\end{document}